\documentclass[12pt]{amsart}

\usepackage{amsmath,amsthm,amssymb}
\usepackage{enumerate}
\usepackage{tikz}
\usetikzlibrary{positioning}
\usepackage{hyperref}  
\usepackage{amsfonts,amscd}
\usepackage{mathtools}
\usepackage{wasysym} 
\usepackage{graphicx}
\usepackage{psfrag}
\usepackage{datetime}
\usepackage{tikz-cd}  
\usepackage{geometry} 
\usepackage[all]{xy}

\DeclareFontFamily{U}{mathx}{\hyphenchar\font45}
\DeclareFontShape{U}{mathx}{m}{n}{
      <5> <6> <7> <8> <9> <10>
      <10.95> <12> <14.4> <17.28> <20.74> <24.88>
      mathx10
      }{}
\DeclareSymbolFont{mathx}{U}{mathx}{m}{n}
\DeclareFontSubstitution{U}{mathx}{m}{n}
\DeclareMathAccent{\widecheck}{0}{mathx}{"71}

\geometry{a4paper} 

\setlength\arraycolsep{1pt}

\newtheorem{theorem}{Theorem}[section]
\newtheorem{lemma}[theorem]{Lemma}
\newtheorem{proposition}[theorem]{Proposition}

\theoremstyle{definition}
\newtheorem{definition}[theorem]{Definition}

\theoremstyle{remark}

   % color for Jordan
   % color for Jordan

    % color for Nick
   % color for Jordan
\newcommand{\nnw}[1]{\textcolor{black}{#1}}    % color for Nick
    % color for Nick
    % color for Nick

\title[]{Iyama's higher Auslander correspondence via the homological theory of idempotent ideals.}
\author{Jordan McMahon}
\address{unaffiliated}
\email{jordanmcmahon37@gmail.com}
\begin{document}
\begin{abstract}
A celebrated result in representation theory is that of higher Auslander correspondence. Let $\Lambda$ an Artin algebra and $X$ a $d$-cluster-tilting module. Iyama has shown that the endomorphism ring $\Gamma$ of $X$ is a $d$-Auslander algebra, and moreover this gives a correspondence between $d$-cluster-tilting modules and $d$-Auslander algebras. We present a self-contained and concise proof using the homological theory of idempotent ideals of Auslander--Platzeck--Todorov.
\end{abstract}

\maketitle

\section{Introduction}

The aim of this note is to present a concise proof of Iyama's higher Auslander correspondence. For the benefit of the reader, it is as self-contained as possible. Let us stress that all the main arguments used are well known: Section \ref{sec1} is based on the work of Iyama \cite{iy1}, Section \ref{sec2} follows from the ideas of Auslander--Platzeck--Todorov \cite{apt}. while Section \ref{sec3} is also due to Iyama \cite{iy3}.

\section{Preliminaries} Let $K$ be a field and $\Lambda$ a finite-dimensional $K$-algebra. We denote by $\mathrm{mod}\Lambda$ the category of (finitely-generated left) $\Lambda$-modules. For morphisms $f:X\rightarrow Y$ and $g:Y\rightarrow Z$, we denote the composition by $fg$.  Let $\mathrm{add}(M)$ be the full subcategory of $\mathrm{mod}\Lambda$ composed of all $\nnw{\Lambda}$-modules isomorphic to direct summands of finite direct sums of copies of $M$. The functor $D=\mathrm{Hom}_K(-,K)$ defines a duality.

Recall that the \emph{dominant dimension} $\mathrm{dom.dim}(\Lambda)$ to be the number $n$ such that for a minimal injective resolution of $\Lambda$:
$$0\rightarrow \Lambda\rightarrow I_0\rightarrow \cdots \rightarrow I_{n-1}\rightarrow I_{n}\rightarrow \cdots$$ the modules $I_0,\ldots, I_{n-1}$ are projective-injective and $I_{n}$ is not projective. Equivalently, $\mathrm{dom.dim}(\Lambda)$ is the number $n$ such that for a minimal projective resolution of $D\Lambda$:
$$\cdots \rightarrow P_n \rightarrow P_{n-1}\rightarrow \cdots \rightarrow P_0\rightarrow D\Lambda\rightarrow 0$$ the modules $P_0,\ldots, P_{n-1}$ are projective-injective and $P_n$ is not.

\section{$d$-cluster-tilting subcategories}\label{sec1}
Let $\mathcal{C}$ be an additive subcategory of $\mathrm{mod}\Lambda$.  A \emph{$\mathcal{C}$-module} is a contravariant additive functor from $\mathcal{C}$ to the category of abelian groups. A $\mathcal{C}$-module $M$ is \emph{finitely presented} if there exists a morphism $f:X\rightarrow Y$ in $\mathrm{mod}\Lambda$ and an exact sequence 
$$\begin{tikzcd}
\mathrm{Hom}_\Lambda(C,X)\arrow{r}{\mathrm{Hom}_\Lambda(C,f)}&\mathrm{Hom}_\Lambda(C,Y)\arrow{r}&M\arrow{r}&0\end{tikzcd}$$
for all $C\in \mathcal{C}$. Denote by $\mathrm{mod}\ \mathcal{C}$ the category of finitely-presented $\mathcal{C}$-modules. If $\mathcal{C}=\mathrm{add}X$ for some $X\in\mathrm{mod}\Lambda$, it is known that the categories $\mathrm{mod}\ \mathcal{C}$ and $\mathrm{mod}\ \mathrm{End}_\Lambda(X)$ are equivalent.

A subcategory $\mathcal{C}$ of $\mathrm{mod}\Lambda$ is \emph{precovering} or \emph{contravariantly finite} if for any $M\in \mathrm{mod}\Lambda$ there is an object $C_M\in\mathcal{C}$ and a morphism $f:C_M\rightarrow M$ such that
$\mathrm{Hom}(C,-)$ is exact on the sequence 
$$\begin{tikzcd}
C_M\arrow{r}&M\arrow{r}&0
\end{tikzcd}$$
for all $C\in\mathcal{C}$. The module $C_M$ is said to be a \emph{right $\mathcal{C}$-approximation}. The dual notion of precovering is \emph{preenveloping} or \emph{covariantly finite}.  A subcategory $\mathcal{C}$ that is both precovering and preenveloping is called \emph{functorially finite}. A \emph{right $\mathcal{C}$-resolution} is a sequence
$$\begin{tikzcd}
\cdots \arrow{r}&C_1\arrow{r}&C_0\arrow{r}&M\arrow{r}&0
\end{tikzcd}$$
with $C_i\in\mathcal{C}$ for each $i$, and which becomes exact under $\mathrm{Hom}_\Lambda(C,-)$ for each $C\in\mathcal{C}$. Define a \emph{left $\mathcal{C}$-resolution} dually.

\begin{definition}\cite[Definition 2.2]{iy1}
A functorially-finite subcategory $\mathcal{C}\subseteq \mathrm{mod}\Lambda$ is a \emph{$d$-cluster-tilting subcategory} if it satisfies the following conditions:
\begin{align*}
\mathcal{C}&=\{X\in\mathrm{mod}\nnw{\Lambda}\mid \mathrm{Ext}^i_{\nnw{\Lambda}}(C,X)=0\ \forall\ 0< i<d,\  C\in\mathcal{C}\}.\\
\mathcal{C}&=\{X\in\mathrm{mod}\nnw{\Lambda} \mid \mathrm{Ext}^i_{\nnw{\Lambda}}(X,C)=0\ \forall\ 0< i<d,\  C\in\mathcal{C} \}.
\end{align*} 
If $\mathcal{C}=\mathrm{add}(M)$, then we say $M$ is a \emph{$d$-cluster-tilting module}.
\end{definition}

In particular, all projective and all injective $\Lambda$-modules are contained in $\mathcal{C}$.
\begin{theorem}\cite[Theorem 3.6.1]{iy1}\label{iyamatheorem}
Let $\mathcal{C}\subseteq \mathrm{mod} \Lambda$ be a $d$-cluster-tilting subcategory. Then 
\begin{enumerate}
\item Any $M\in\mathrm{mod}\Lambda$ has a right $\mathcal{C}$-resolution
$$\begin{tikzcd}
0\arrow{r}&C_{d-1}\arrow{r}&\cdots \arrow{r}&C_1\arrow{r}&C_0\arrow{r}&M\arrow{r}&0.
\end{tikzcd}$$
\item Any $M\in\mathrm{mod}\Lambda$ has a left $\mathcal{C}$-resolution
$$\begin{tikzcd}
0\arrow{r}&M \arrow{r}&C_0\arrow{r}&C_1\arrow{r}&\cdots\arrow{r}&C_{d-1}\arrow{r}&0.
\end{tikzcd}$$
\end{enumerate}
\end{theorem}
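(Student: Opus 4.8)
The plan is to prove part (1) directly, by iterating right $\mathcal{C}$-approximations, and then to deduce part (2) from it by duality. First I would observe that since $\mathcal{C}$ contains every projective module, each right $\mathcal{C}$-approximation is an epimorphism: a projective cover $P\twoheadrightarrow M$ has $P\in\mathcal{C}$ and so factors through any right $\mathcal{C}$-approximation $C_0\to M$. Using that $\mathcal{C}$ is contravariantly finite, I would then build inductively short exact sequences $0\to\Omega_{k+1}\to C_k\xrightarrow{p_k}\Omega_k\to 0$ with $C_k\in\mathcal{C}$, $\Omega_0=M$, and each $p_k$ a right $\mathcal{C}$-approximation. Splicing them yields a genuinely exact complex $\cdots\to C_1\to C_0\to M\to 0$, and I would note that it stays exact after applying $\mathrm{Hom}_\Lambda(C,-)$: the defining surjectivity of each approximation together with the left-exactness of $\mathrm{Hom}_\Lambda(C,-)$ makes every $0\to\mathrm{Hom}(C,\Omega_{k+1})\to\mathrm{Hom}(C,C_k)\to\mathrm{Hom}(C,\Omega_k)\to 0$ exact, and these splice to exactly the $\mathrm{Hom}(C,-)$-exactness demanded of a right $\mathcal{C}$-resolution.

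The heart of the argument is that this process terminates after $d-1$ steps, i.e.\ that $\Omega_{d-1}\in\mathcal{C}$. By the first $\mathrm{Ext}$-characterization in the definition it suffices to show $\mathrm{Ext}^i_\Lambda(C,\Omega_{d-1})=0$ for all $C\in\mathcal{C}$ and all $0<i<d$, and I would prove the more precise statement, by induction on $k$, that $\mathrm{Ext}^i_\Lambda(C,\Omega_k)=0$ for all $C\in\mathcal{C}$ and all $1\le i\le\min(k,d-1)$. The base case $k=0$ is vacuous. For the step, I pass from $\Omega_k$ to $\Omega_{k+1}$ via the sequence $0\to\Omega_{k+1}\to C_k\to\Omega_k\to 0$: in degree $i=1$ the surjectivity of $\mathrm{Hom}(C,C_k)\to\mathrm{Hom}(C,\Omega_k)$ kills the connecting map, so $\mathrm{Ext}^1(C,\Omega_{k+1})$ injects into $\mathrm{Ext}^1(C,C_k)=0$; while for $2\le i\le\min(k+1,d-1)$ the long exact sequence gives a dimension shift $\mathrm{Ext}^i(C,\Omega_{k+1})\cong\mathrm{Ext}^{i-1}(C,\Omega_k)$, valid because $\mathrm{Ext}^{i-1}(C,C_k)=\mathrm{Ext}^i(C,C_k)=0$ throughout the window $0<i-1,\,i<d$, and the inductive hypothesis annihilates the right-hand side since $i-1\le\min(k,d-2)\le\min(k,d-1)$. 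Taking $k=d-1$ gives $\min(k,d-1)=d-1$, hence $\mathrm{Ext}^i(C,\Omega_{d-1})=0$ for all $0<i<d$, so $\Omega_{d-1}\in\mathcal{C}$; setting $C_{d-1}:=\Omega_{d-1}$ produces the resolution of length $d-1$ in (1).

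Finally, I would obtain part (2) by duality. The functor $D=\mathrm{Hom}_K(-,K)$ sends $\mathcal{C}$ to a subcategory $D\mathcal{C}\subseteq\mathrm{mod}\,\Lambda^{\mathrm{op}}$, and the natural isomorphisms $\mathrm{Ext}^i_\Lambda(X,Y)\cong\mathrm{Ext}^i_{\Lambda^{\mathrm{op}}}(DY,DX)$ interchange the two $\mathrm{Ext}$-conditions of the definition, so $D\mathcal{C}$ is again $d$-cluster-tilting. Applying part (1) over $\Lambda^{\mathrm{op}}$ to $DM$ and then applying $D$ converts the resulting right $D\mathcal{C}$-resolution into a left $\mathcal{C}$-resolution of $M$ of length $d-1$, which is exactly (2). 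I expect the main obstacle to be the index bookkeeping in the inductive $\mathrm{Ext}$-vanishing step: one must keep every dimension shift strictly inside the window $0<i<d$ where the $\mathrm{Ext}$ groups of the approximating objects vanish, and arrange the threshold $\min(k,d-1)$ so that it reaches precisely $d-1$ at the last stage.
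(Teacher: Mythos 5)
Your proposal is correct and takes essentially the same route as the paper's proof: iterate right $\mathcal{C}$-approximations (surjective because all projectives lie in $\mathcal{C}$), use dimension shifting in the long exact sequence to show the $(d-1)$-st syzygy satisfies $\mathrm{Ext}^i_\Lambda(C,-)=0$ for all $0<i<d$ and hence lies in $\mathcal{C}$ by the maximality condition, and obtain part (2) by duality. If anything, your explicit induction with the threshold $\min(k,d-1)$ is tidier bookkeeping than the paper's, which states the inductive hypothesis only in the form needed for the final step.
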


\begin{proof}
We give a proof for right resolutions. There is a right $\mathcal{C}$-approximation $f:C_0\rightarrow M$, since $\mathcal{C}$ is precovering. The morphism $f$ is surjective, since every projective $\Lambda$-module is in $\mathcal{C}$. Hence there is a short exact sequence 
$$\begin{tikzcd}
0 \arrow{r}&K\arrow{r}&C_0\arrow{r}{f}&M\arrow{r}&0.
\end{tikzcd}$$
For any $C\in \mathcal{C}$, there is a long exact sequence 
$$\begin{tikzcd}
0\arrow{r}&\mathrm{Hom}_\Lambda(C,K)\arrow{r}&\mathrm{Hom}_\Lambda(C,C_0) \arrow{r}{\mathrm{Hom}_\Lambda(C,f)}&\mathrm{Hom}_\Lambda(C,M)\\
\ \arrow{r}&\mathrm{Ext}^1_\Lambda(C,K)\arrow{r}&\mathrm{Ext}^1_\Lambda(C,C_0)=0.
\end{tikzcd}$$
Since $\mathrm{Hom}_\Lambda(C,f)$ is surjective, this implies $\mathrm{Ext}^1_\Lambda(C,K)=0$ and that there is an exact sequence
$$\begin{tikzcd}
0\arrow{r}&\mathrm{Hom}_\Lambda(C,K)\arrow{r}&\mathrm{Hom}_\Lambda(C,C_0) \arrow{r}{\mathrm{Hom}_\Lambda(C,f)}&\mathrm{Hom}_\Lambda(C,M)\arrow{r} &0.
\end{tikzcd}$$ 

For some $K^\prime\in \mathrm{mod}\Lambda$ there is a short exact sequence induced by the right $\mathcal{C}$-approximation $C_N\rightarrow N$:
$$\begin{tikzcd}
0 \arrow{r}&K^\prime\arrow{r}&C_N\arrow{r}&N\arrow{r}&0.
\end{tikzcd}$$Now suppose there is a $\Lambda$-module $N$ such that for all $0<i< d-1$ we have $\mathrm{Ext}^i_\Lambda(C,N)=0$ and that for any $C\in\mathcal{C}$ there is an exact sequence 
$$\begin{tikzcd}
0\arrow{r}&\mathrm{Hom}_\Lambda(C,N)\arrow{r}&\mathrm{Hom}_\Lambda(C,C_{d-2}) \arrow{r}&\cdots&\ \\ 
\ \arrow{r}&\mathrm{Hom}_\Lambda(C,C_{0}) \arrow{r}{\mathrm{Hom}_\Lambda(C,f)}& \mathrm{Hom}_\Lambda(C,M)\arrow{r} &0.
\end{tikzcd}$$ 
For any $C\in \mathcal{C}$ and any $1<i<d$, there is an exact sequence 
$$\begin{tikzcd}
 \mathrm{Ext}^{i-1}_\Lambda(C,N)\arrow{r}&\mathrm{Ext}^{i}_\Lambda(C,K^\prime)\arrow{r}&\mathrm{Ext}^{i}_\Lambda(C,C_N)=0
\end{tikzcd}$$
which, together with the above argument for $d=1$, implies $\mathrm{Ext}^{i}_\Lambda(C,K^\prime)=0$ for all $0<i<d$. So $K^\prime \in\mathcal{C}$. Additionally
there is an exact sequence
$$\begin{tikzcd}
0\arrow{r}&\mathrm{Hom}_\Lambda(C,K)\arrow{r}&\mathrm{Hom}_\Lambda(C,C_N)\arrow{r}&\mathrm{Hom}_\Lambda(C,C_{d-2})  \arrow{r}&\cdots &\\
\  \arrow{r}& \mathrm{Hom}_\Lambda(C,C_{0}) \arrow{r}&\mathrm{Hom}_\Lambda(C,M)\arrow{r} &0.
\end{tikzcd}$$ This finishes the proof. 
\end{proof}

\section{Homological theory of idempotent ideals}\label{sec2}
Let $\Gamma$ be a finite-dimensional algebra, and $P$ be a projective $\Gamma$-module. Let $\mathcal{I}=\tau_P(\Gamma)$, the trace of $P$ in $\Gamma$ which is the ideal generated by the homomorphic images of $P$ in $\Gamma$, be the idempotent ideal corresponding to $P$. When $P=\Gamma e$, then $\mathcal{I}=\langle e\rangle$, the two-sided ideal generated by the idempotent $e$. For a positive integer $d$, we define $\mathbf{P}_{d-1}$ to be the full subcategory of $\mathrm{mod}\Gamma$ consisting of the $\Gamma$-modules $X$ having a projective resolution
$$\cdots \rightarrow P_1\rightarrow P_0\rightarrow X\rightarrow 0$$ with $P_i\in\mathrm{add}(P)$ for all $0\leq i\leq d-1$. There is a characterisation of $\mathbf{P}_{d-1}$.
\begin{proposition}\cite[Proposition 2.4]{apt}\label{apti}
Let $P$ be a projective $\Lambda$-module. The following conditions are equivalent for a $\Gamma$-module $X$:
\begin{enumerate}[(i)]
\item $X$ is in $\mathbf{P}_{d-1}$.
\item $\mathrm{Ext}^i_\Gamma(X,Y)=0$ for all $A/\mathcal{I} $-modules $Y$ and $0<i<d$.
\item $\mathrm{Ext}^i_\Gamma(X,Y)=0$ for all injective $A/\mathcal{I} $-modules $Y$ and $0<i<d$.
\end{enumerate}
\end{proposition}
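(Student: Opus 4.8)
The plan is to first translate the condition ``$Y$ is a $\Gamma/\mathcal{I}$-module'' into a vanishing of $\mathrm{Hom}_\Gamma(P,-)$, and then establish the cycle of implications (i)$\Rightarrow$(ii)$\Rightarrow$(iii)$\Rightarrow$(i). The preliminary observation, which I would isolate as a lemma, is that since $\mathcal{I}=\tau_P(\Gamma)$ is the trace ideal of $P$ one has $\tau_P(Y)=\mathcal{I}Y$ for every $Y$, and consequently $\mathcal{I}Y=0$ if and only if $\mathrm{Hom}_\Gamma(P',Y)=0$ for all $P'\in\mathrm{add}(P)$; choosing an idempotent $e$ with $\mathrm{add}(P)=\mathrm{add}(\Gamma e)$ and $\mathcal{I}=\Gamma e\Gamma$, this rests on the identification $\mathrm{Hom}_\Gamma(\Gamma e,Y)\cong eY$. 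Thus the $\Gamma/\mathcal{I}$-modules are exactly the modules on which $\mathrm{add}(P)$ acts trivially through $\mathrm{Hom}$.

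For (i)$\Rightarrow$(ii) I would fix a projective resolution of $X$ with $P_0,\dots,P_{d-1}\in\mathrm{add}(P)$ and apply $\mathrm{Hom}_\Gamma(-,Y)$ for a $\Gamma/\mathcal{I}$-module $Y$. By the lemma the resulting cochain complex is identically zero in cohomological degrees $0,\dots,d-1$, so its cohomology groups $\mathrm{Ext}^i_\Gamma(X,Y)$ vanish there, giving (ii) (and in fact also the degree-$0$ statement). The implication (ii)$\Rightarrow$(iii) is immediate, injective $\Gamma/\mathcal{I}$-modules being a subclass of all $\Gamma/\mathcal{I}$-modules.

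The substance is (iii)$\Rightarrow$(i), which I would prove by induction on $d$ using dimension shifting, the engine being the sub-lemma: for any $Z$, one has $\mathrm{Hom}_\Gamma(Z,Y)=0$ for every injective $\Gamma/\mathcal{I}$-module $Y$ if and only if $Z\in\mathbf{P}_0$, i.e. $Z$ is an epimorphic image of an object of $\mathrm{add}(P)$. The ``if'' direction is the top argument via the lemma; for ``only if'' I would take the maximal $\Gamma/\mathcal{I}$-quotient $Z\twoheadrightarrow Z/\mathcal{I}Z$ and compose with the embedding of $Z/\mathcal{I}Z$ into its injective envelope $Y_0$ (an injective $\Gamma/\mathcal{I}$-module), so that $\mathrm{Hom}_\Gamma(Z,Y_0)=0$ forces $Z/\mathcal{I}Z=0$, i.e. $Z=\mathcal{I}Z=\tau_P(Z)$. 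Granting this, the hypothesis at degree $0$ gives $X\in\mathbf{P}_0$, so there is an epimorphism $P_0\twoheadrightarrow X$ with $P_0\in\mathrm{add}(P)$; putting $Z=\Omega X$ and applying $\mathrm{Hom}_\Gamma(-,Y)$ to $0\to Z\to P_0\to X\to 0$, the vanishing $\mathrm{Hom}_\Gamma(P_0,Y)=0$ (as $P_0\in\mathrm{add}(P)$) together with $\mathrm{Ext}^{\geq 1}_\Gamma(P_0,Y)=0$ (as $P_0$ is projective) yields $\mathrm{Hom}_\Gamma(Z,Y)\cong\mathrm{Ext}^1_\Gamma(X,Y)$ and $\mathrm{Ext}^i_\Gamma(Z,Y)\cong\mathrm{Ext}^{i+1}_\Gamma(X,Y)$ for $i\geq 1$. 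Hence $Z$ satisfies (iii) with $d$ replaced by $d-1$, and splicing the inductively produced $\mathrm{add}(P)$-resolution of $Z$ onto $P_0$ completes the resolution of $X$.

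The step I expect to be the main obstacle is exactly this sub-lemma: detecting $P$-generation of $Z$ using maps into \emph{injective} $\Gamma/\mathcal{I}$-modules only, which is what makes the a priori weaker (iii) suffice to recover (i). It is here that the trace-ideal identity $\tau_P(Z)=\mathcal{I}Z$ and the existence of injective envelopes over $\Gamma/\mathcal{I}$ are both indispensable; once it is available, the dimension shift is routine, the only bookkeeping being that the degree-$0$ vanishing for each successive syzygy is furnished by the degree-$1$ Ext-vanishing of the previous one.
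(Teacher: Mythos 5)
Your argument is sound and is exactly the intended one: the paper gives no details beyond the words ``by induction on $d$'' (it explicitly omits the proof as not being used in the sequel), and your dimension-shifting induction, driven by the trace-ideal identity $\tau_P(Y)=\mathcal{I}Y$ and the injective-envelope argument for the base case, is the standard proof of \cite[Proposition 2.4]{apt}.

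One point needs flagging, however. In the step (iii)$\Rightarrow$(i) you invoke ``the hypothesis at degree $0$,'' i.e.\ $\mathrm{Hom}_\Gamma(X,Y)=0$ for all injective $\Gamma/\mathcal{I}$-modules $Y$. That hypothesis is not contained in conditions (ii) and (iii) as printed: the stated range is $0<i<d$, which excludes $i=0$. This is a defect of the statement rather than of your argument: as printed the proposition is false --- for $d=1$ conditions (ii) and (iii) are vacuous while (i) asserts that $X$ is generated by $P$ --- and the original result of Auslander--Platzeck--Todorov has the range $0\le i\le d-1$. Your proof, which as you noted also yields the degree-$0$ vanishing in the direction (i)$\Rightarrow$(ii), is a correct and complete proof of that corrected statement; but you should say explicitly that the degree-$0$ condition must be added to (ii) and (iii), since without it no proof of (iii)$\Rightarrow$(i) can exist, and a reader checking your appeal to ``the hypothesis at degree $0$'' against the statement as printed would find no such hypothesis there.
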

\begin{proof}
by induction on $d$.
\end{proof}
We omit a more detailed proof of Proposition \ref{apti}, since it is not used in the sequel. Note that in the special case $X=\Gamma$ (as used by Iyama \cite[Lemma 3.5.1]{iy3}) Proposition \ref{apti} reduces the proof of the following result, which also uses arguments from Sections 5 and 6 of \cite{auslander}.
\begin{theorem}\cite[Theorem 3.2]{apt}\label{aptii}
Let $P$ be a projective $\Gamma$-module and $\Lambda:=\mathrm{End}_\Gamma(P)$. For any $Y\in\mathrm{mod}\Gamma$ and any $d\geq 1$, the functor $G:=\mathrm{Hom}_\Gamma(P,-)$ induces an isomorphism $$\mathrm{Ext}_\Gamma^{d-1}(X,Y)\rightarrow \mathrm{Ext}^{d-1}_\Lambda(GX,GY)$$ provided $X\in \mathbf{P}_d$. 
 \end{theorem}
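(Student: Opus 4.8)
The plan is to compute both sides from a single projective resolution of $X$ and transport it across $G$. Two elementary properties of $G=\mathrm{Hom}_\Gamma(P,-)$ drive everything. First, since $P$ is projective, $G$ is exact. Second, $GP=\mathrm{Hom}_\Gamma(P,P)=\Lambda$ is the regular $\Lambda$-module, so by additivity $G$ restricts to an equivalence $\mathrm{add}(P)\xrightarrow{\sim}\mathrm{proj}\,\Lambda$ carrying objects of $\mathrm{add}(P)$ to projective $\Lambda$-modules. Consequently, if $X\in\mathbf{P}_d$ and
$$\cdots\rightarrow P_1\rightarrow P_0\rightarrow X\rightarrow 0,\qquad P_0,\ldots,P_d\in\mathrm{add}(P),$$
is a projective resolution witnessing membership in $\mathbf{P}_d$, then applying the exact functor $G$ yields an exact sequence $\cdots\rightarrow GP_1\rightarrow GP_0\rightarrow GX\rightarrow 0$ whose first $d+1$ terms $GP_0,\ldots,GP_d$ are projective $\Lambda$-modules. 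This is a partial projective resolution of $GX$ of length $d$, which is exactly enough to compute $\mathrm{Ext}^{j}_\Lambda(GX,-)$ for $j\le d-1$.

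The technical heart is the following natural isomorphism: for every $P'\in\mathrm{add}(P)$ and every $Y\in\mathrm{mod}\,\Gamma$, the map
$$\theta_{P',Y}\colon\ \mathrm{Hom}_\Gamma(P',Y)\longrightarrow \mathrm{Hom}_\Lambda(GP',GY),\qquad f\mapsto Gf,$$
is an isomorphism, natural in both variables. I would verify this first for $P'=P$: here $GP'=\Lambda$, and composing $\theta_{P,Y}$ with the canonical identification $\mathrm{Hom}_\Lambda(\Lambda,GY)\cong GY=\mathrm{Hom}_\Gamma(P,Y)$ given by evaluation at $\mathrm{id}_P$ recovers the identity map, so $\theta_{P,Y}$ is bijective. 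Since both sides are additive functors of $P'$ and $\theta$ is compatible with direct sum decompositions, the claim extends to all $P'\in\mathrm{add}(P)$. Naturality in the first variable is what allows $\theta$ to commute with the differentials obtained from the resolution.

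With the lemma in hand, I would apply $\mathrm{Hom}_\Gamma(-,Y)$ to $P_\bullet$ and $\mathrm{Hom}_\Lambda(-,GY)$ to $GP_\bullet$, obtaining two cochain complexes whose degree-$i$ terms are $\mathrm{Hom}_\Gamma(P_i,Y)$ and $\mathrm{Hom}_\Lambda(GP_i,GY)$. For $0\le i\le d$ the maps $\theta_{P_i,Y}$ are isomorphisms, and by naturality they assemble into a commuting ladder, so the two complexes coincide in degrees $0,\ldots,d$. Since the cohomology in degree $d-1$ only involves the terms in degrees $d-2,d-1,d$, and the truncated resolution $GP_d\to\cdots\to GP_0\to GX\to 0$ is a genuine partial projective resolution (exact, with projective terms), taking $H^{d-1}$ of both complexes gives
$$\mathrm{Ext}^{d-1}_\Gamma(X,Y)\ \cong\ H^{d-1}\big(\mathrm{Hom}_\Lambda(GP_\bullet,GY)\big)\ \cong\ \mathrm{Ext}^{d-1}_\Lambda(GX,GY),$$
and one checks this isomorphism is the map induced by $G$. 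I expect the main obstacle to be the bookkeeping in the natural isomorphism $\theta$: establishing its naturality in the first variable so that it intertwines the differentials, and confirming that the hypothesis $X\in\mathbf{P}_d$ (rather than $\mathbf{P}_{d-1}$) supplies precisely the term $GP_d$ needed to compute the degree-$(d-1)$ cohomology on the $\Lambda$-side.
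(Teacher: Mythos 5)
Your proof is correct, but it is organized differently from the paper's. Both arguments rest on the same two pillars: exactness of $G=\mathrm{Hom}_\Gamma(P,-)$ and the natural isomorphism $\theta_{P'}\colon \mathrm{Hom}_\Gamma(P',Y)\to\mathrm{Hom}_\Lambda(GP',GY)$ for $P'\in\mathrm{add}(P)$, which the paper invokes as ``the canonical isomorphism''. The paper, however, proceeds by induction on the degree: the base case $X\in\mathbf{P}_1$ is the Five Lemma applied to the two-row ladder coming from $P_1\to P_0\to X\to 0$, and the inductive step is dimension shifting along $0\to K\to P_0\to X\to 0$ with $K\in\mathbf{P}_i$, comparing $\mathrm{Ext}^i(X,-)$ with $\mathrm{Ext}^{i-1}(K,-)$ over both rings; the low-degree case $i=2$ needs separate treatment and is explicitly omitted there. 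You replace the induction by a single chain-level comparison: apply $G$ to the resolution, note that the two Hom-complexes are isomorphic in degrees $0,\dots,d$ via the naturality of $\theta$, and take $H^{d-1}$. What your route buys is the absence of induction and of the awkward edge case, plus a transparent explanation of why the hypothesis must be $\mathbf{P}_d$ rather than $\mathbf{P}_{d-1}$: the degree-$d$ term of the resolution must lie in $\mathrm{add}(P)$ so that the kernels defining the degree-$(d-1)$ cohomology match on the two sides. What it costs is one standard fact you use without proof: that the exact sequence $GP_d\to\cdots\to GP_0\to GX\to 0$ with projective terms computes $\mathrm{Ext}^{j}_\Lambda(GX,-)$ for $j\leq d-1$; this follows by splicing a projective resolution of $\ker(GP_d\to GP_{d-1})$ onto the truncation, and it is precisely here that projectivity of $GP_d$, i.e. the membership $P_d\in\mathrm{add}(P)$, enters. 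Both proofs leave to the reader the final check that the resulting isomorphism agrees with the map induced by $G$.
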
 

\begin{proof}We prove by induction, omitting the case $i=2$ since the argument requires only minor modifications. Let $Y$ be a fixed, but arbitrary $\Gamma$-module. Recall that there is a canonical isomorphism $\mathrm{Hom}_\Gamma(P,Y)\cong \mathrm{Hom}_\Lambda(GP,GY)$. First suppose $X\in\mathbf{P}_1$; so there is an exact sequence $P_1\rightarrow P_0\rightarrow X\rightarrow 0$ such that $P_0,P_1\in\mathrm{add}P$. Then there is a commutative diagram

$$\begin{tikzcd}
0\arrow{r}& \mathrm{Hom}_\Gamma(X,Y)\arrow{r}\arrow{d}{}& \mathrm{Hom}_\Gamma(P_0,Y)\arrow{d}{\simeq}\arrow{r}&  \mathrm{Hom}_\Gamma(P_1,Y)\arrow{d}{\simeq}\\
0\arrow{r}& \mathrm{Hom}_\Lambda(GX,GY)\arrow{r}& \mathrm{Hom}_\Lambda(GP_0,GY)\arrow{r}&  \mathrm{Hom}_\Lambda(GP_1,GY)\\
\end{tikzcd}$$
implying $\mathrm{Hom}_\Gamma(X,Y)\cong \mathrm{Hom}_\Lambda(GX,GY)$ by the Five Lemma. 
 Now suppose for some $i\geq 2$ that for all $M\in \mathbf{P}_{i}$ there are isomorphisms $$\mathrm{Ext}_\Gamma^{i-1}(M,Y)\rightarrow \mathrm{Ext}^{i-1}_\Lambda(GM,GY).$$ Let $X\in \mathbf{P}_{i+1}$ and consider an exact sequence $0\rightarrow K\rightarrow P_0\rightarrow X\rightarrow 0$ with $P_0\in \mathrm{add}(P)$. 
Since $K\in \mathbf{P}_{i}$, we get $\mathrm{Ext}^{i-1}_\Gamma(K,Y)\cong \mathrm{Ext}^{i-1}_\Lambda(GK,GY)$. From the commutative diagram
$$\begin{tikzcd}
0\arrow{r}& \mathrm{Ext}^{i-1}_\Gamma(K,Y)\arrow{r}\arrow{d}& \mathrm{Ext}^{i}_\Gamma(X,Y)\arrow{d}\arrow{r}& 0\\
 0\arrow{r} &\mathrm{Ext}^{i-1}_\Lambda(GK,GY)\arrow{r}& \mathrm{Ext}^{i}_\Lambda(GX,GY)\arrow{r} &0
\end{tikzcd}$$
we find also $\mathrm{Ext}^{i}_\Gamma(X,Y)\cong  \mathrm{Ext}^{i}_\Lambda(GX,GY)$. 
\end{proof}
\section{Higher Auslander correspondence}\label{sec3}
Recall for a $\Lambda$-module $X$ such that $\Lambda,D\Lambda\in \mathrm{add}(X)$, the endomorphism algebra $\Gamma:=\mathrm{End}_\Lambda(X)$ has projective modules given by $\mathrm{add}(\mathrm{Hom}_\Lambda(X,X))$,  and projective-injective modules given by $\mathrm{add}( \mathrm{Hom}_\Lambda(X,D\Lambda))$.
\begin{proposition}\label{iyi}
Suppose $\Lambda$ is an artin algebra and $X$ is a $d$-cluster-tilting module in $\mathrm{mod}\Lambda$. Then $\Gamma:=\mathrm{End}_\Lambda(X) $ satisfies $$\mathrm{dom.dim}(\Gamma)\geq d+1\geq\mathrm{gl.dim} (\Gamma),$$ i.e. $\Gamma$ is a \emph{$d$-Auslander algebra}.
\end{proposition}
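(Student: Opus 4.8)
The plan is to treat the two inequalities separately, organising everything around the functor $G=\mathrm{Hom}_\Lambda(X,-)\colon\mathrm{mod}\Lambda\to\mathrm{mod}\Gamma$ and the subcategory $\mathcal C=\mathrm{add}(X)$. I would use three features of $G$: it restricts to an equivalence from $\mathcal C$ onto the projective $\Gamma$-modules; it sends the injective $\Lambda$-modules (which lie in $\mathcal C$ because $D\Lambda\in\mathcal C$) into the projective-injective $\Gamma$-modules $\mathrm{add}(\mathrm{Hom}_\Lambda(X,D\Lambda))$; and, being left exact and equal to $\mathrm{Hom}_\Lambda(X,-)$ with $X\in\mathcal C$, it carries any \emph{right} $\mathcal C$-resolution to an exact sequence of $\Gamma$-modules. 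I will also use repeatedly that $X$ is $d$-cluster-tilting, so $\mathrm{Ext}^i_\Lambda(X,X)=0$ for $0<i<d$.

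For $\mathrm{gl.dim}(\Gamma)\le d+1$ I would first record that $\mathrm{pd}_\Gamma(GM)\le d-1$ for every $M\in\mathrm{mod}\Lambda$: Theorem \ref{iyamatheorem}(1) furnishes a right $\mathcal C$-resolution $0\to C_{d-1}\to\cdots\to C_0\to M\to 0$, and applying $G$, under which it stays exact, yields a projective resolution $0\to GC_{d-1}\to\cdots\to GC_0\to GM\to 0$ of length $d-1$. Now let $F$ be an arbitrary $\Gamma$-module and pick a projective presentation $GC_1\xrightarrow{Gg}GC_0\to F\to 0$ with $g\colon C_1\to C_0$ in $\mathcal C$ (possible since $G$ is full on $\mathcal C$). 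Left exactness of $G$ identifies $\ker(Gg)$ with $G(\ker g)$, so the second syzygy satisfies $\Omega^2 F\cong G(\ker g)$. Combining the two steps gives $\mathrm{pd}_\Gamma F\le 2+\mathrm{pd}_\Gamma G(\ker g)\le 2+(d-1)=d+1$, which is the claim.

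For $\mathrm{dom.dim}(\Gamma)\ge d+1$ the plan is to coresolve $\Gamma=GX$ by projective-injectives explicitly. I would take the minimal injective resolution $0\to X\to I^0\to I^1\to\cdots$ of $X$ over $\Lambda$, with cosyzygies $Z^j$, and apply the left-exact functor $G$. Since each $I^j$ is injective, and since $\mathrm{Ext}^1_\Lambda(X,Z^{j-1})\cong\mathrm{Ext}^{j}_\Lambda(X,X)$ vanishes for $1\le j\le d-1$ by the cluster-tilting hypothesis, one checks that $G$ produces an exact sequence $0\to\Gamma\to GI^0\to\cdots\to GI^{d-1}\to GI^d$, exact at $\Gamma$ and at $GI^0,\dots,GI^{d-1}$, whose $d+1$ terms $GI^0,\dots,GI^d$ are all projective-injective. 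The standard criterion that $\mathrm{dom.dim}(\Gamma)\ge n$ holds exactly when $\Gamma$ admits such an exact sequence with $n$ projective-injective terms then yields $\mathrm{dom.dim}(\Gamma)\ge d+1$.

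The hard part will be the precise exactness count in the last step, which is where the value $d+1$ is pinned down. Because $G$ is only left exact, its application to a short exact sequence $0\to Z^{j}\to I^{j}\to Z^{j+1}\to 0$ preserves right-exactness only when the obstruction $\mathrm{Ext}^1_\Lambda(X,Z^{j})\cong\mathrm{Ext}^{j+1}_\Lambda(X,X)$ vanishes, i.e. for $j\le d-2$. At the top this obstruction is $\mathrm{Ext}^d_\Lambda(X,X)$, which need \emph{not} vanish; the point to verify is that it only destroys surjectivity of $GI^{d-1}\to GZ^d$, hence only exactness at the terminal term $GI^d$, whereas exactness at $GI^{d-1}$ still holds because $\ker(GI^{d-1}\to GI^d)=G(\ker(I^{d-1}\to I^d))=GZ^{d-1}$ already coincides with the image coming from $GI^{d-2}$ (which uses only $\mathrm{Ext}^{d-1}_\Lambda(X,X)=0$). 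This is exactly what makes the coresolution run to length $d+1$ and no further. As an alternative to this hands-on bookkeeping, the same vanishing statements can be organised through the $\mathrm{Ext}$-comparison of Theorem \ref{aptii} applied to the projective-injective $\Gamma$-module $\mathrm{Hom}_\Lambda(X,D\Lambda)$, together with the characterisation of $\mathbf P_{d-1}$ in Proposition \ref{apti}.
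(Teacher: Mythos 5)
Your proposal is correct and follows essentially the same route as the paper: the bound $\mathrm{gl.dim}(\Gamma)\leq d+1$ via a projective presentation whose second syzygy is the image under $\mathrm{Hom}_\Lambda(X,-)$ of a kernel, resolved by Theorem \ref{iyamatheorem}(1), and the bound $\mathrm{dom.dim}(\Gamma)\geq d+1$ by applying $\mathrm{Hom}_\Lambda(X,-)$ to an injective resolution of $X$ and using the vanishing of $\mathrm{Ext}^i_\Lambda(X,X)$ for $0<i<d$. Your write-up is in fact more careful than the paper's on the two points it leaves implicit (the exactness bookkeeping with the cosyzygies $Z^j$, and the comparison with the minimal injective coresolution), and it avoids the paper's off-by-one indexing in the extended projective resolution, but the underlying argument is the same.
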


\begin{proof}
Let $F:=\mathrm{Hom}_\Lambda(X,-)$ and $\mathcal{C}:=\mathrm{add}(X)$.
We have that $_\Gamma\Gamma=FX$. Suppose $X$ has an injective resolution:
$$0\rightarrow X\rightarrow I_0\rightarrow I_1\rightarrow \cdots$$
Since $X$ is $d$-cluster tilting, we have an exact sequence
$$0\rightarrow FX\rightarrow FI_0 \rightarrow \cdots \rightarrow FI_{d}$$
whereby $FI_j$ is projective-injective for each $0\leq j\leq d$. Hence  $\mathrm{dom.dim}(\Gamma )\geq d+1$.
For each $M\in\mathrm{mod}\ \Gamma$, we show $\mathrm{proj.dim}(M)\leq d+1$, and hence $\mathrm{gl.dim}(\Gamma)\leq d+1$. Since $\mathrm{mod}\ \Gamma$ is equivalent to $\mathrm{mod}\ \mathcal{C}$, there exist $C_{-2},C_{-1}\in\mathcal{C}$ and an exact sequence:
$$FC_{-1} \rightarrow FC_{-2}\rightarrow M\rightarrow 0.$$ By Theorem \ref{iyamatheorem}, this extends to an exact sequence
$$0\rightarrow FC_d\rightarrow \cdots \rightarrow FC_0\rightarrow FC_{-1}\rightarrow FC_{-2} \rightarrow M\rightarrow 0$$ such that $C_i\in\mathcal{C}$ for all $-2\leq i\leq d$. Hence $d+1\geq\mathrm{gl.dim}( \Gamma)$ and we are done.
\end{proof}

\begin{proposition}\label{iyii}
Let $\Gamma$ be a $d$-Auslander algebra, let $P$ be a minimal projective-injective generator and let $\Lambda:=\mathrm{End}_\Gamma(P)$. Then there is a $d$-cluster-tilting subcategory $\mathcal{C}\subseteq\mathrm{mod}\Lambda$.
\end{proposition}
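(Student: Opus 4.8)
The plan is to realise the desired subcategory as $\mathcal{C}:=\mathrm{add}(X)$ for a single module $X$, and then to verify the two Ext-vanishing conditions of the definition by transporting all homological computations from $\mathrm{mod}\,\Gamma$ — where I control both $\mathrm{gl.dim}$ and $\mathrm{dom.dim}$ — to $\mathrm{mod}\,\Lambda$ through $G=\mathrm{Hom}_\Gamma(P,-)$. First I would set $X:=G(D\Gamma)$ (so $X\cong DP$) and record the basic features. As $P$ is projective, $G$ is exact and carries $\mathrm{add}(P)$, the projective-injective $\Gamma$-modules, onto $\mathrm{add}(\Lambda)=\mathrm{proj}\,\Lambda$; since $P$ is also injective we have $P\in\mathrm{add}(D\Gamma)$, whence $\Lambda=GP\in\mathcal{C}$, so $X$ is a generator. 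Because $\mathrm{dom.dim}(\Gamma)\geq d+1\geq 2$, the minimal projective resolution of $D\Gamma$ has its first $d+1$ terms in $\mathrm{add}(P)$, so $D\Gamma\in\mathbf{P}_d$, and the same holds for every injective. The case $d=1$ of Theorem \ref{aptii} then gives $\mathrm{End}_\Lambda(X)\cong\mathrm{End}_\Gamma(D\Gamma)\cong\Gamma$ and shows $G$ is fully faithful on $\mathbf{P}_1$; the classical correspondence for dominant dimension $\geq 2$ moreover makes $X$ a generator-cogenerator, so $\Lambda,D\Lambda\in\mathcal{C}$. Functorial finiteness of $\mathcal{C}=\mathrm{add}(X)$ is automatic.

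The decisive point is that the range $0<i<d$ occurring in the definition of a $d$-cluster-tilting subcategory is exactly the range in which $G$ induces isomorphisms on Ext: since every injective $I$ lies in $\mathbf{P}_d\subseteq\mathbf{P}_{i+1}$ for $0<i<d$, Theorem \ref{aptii} yields
$$\mathrm{Ext}^i_\Gamma(I,N)\cong\mathrm{Ext}^i_\Lambda(GI,GN)\qquad(0<i<d)$$
for all $N$. Taking $I=N=D\Gamma$ and using that $D\Gamma$ is injective gives $\mathrm{Ext}^i_\Lambda(X,X)=0$ for $0<i<d$; additivity then shows $\mathrm{add}(X)\subseteq\{Y:\mathrm{Ext}^i_\Lambda(X,Y)=0\ \forall\,0<i<d\}$ and, dually, $\mathrm{add}(X)\subseteq\{Y:\mathrm{Ext}^i_\Lambda(Y,X)=0\ \forall\,0<i<d\}$. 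These are the easy halves of the two conditions.

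For the reverse inclusions I would argue as follows. The functor $G$ restricts to an equivalence $\mathbf{P}_1\xrightarrow{\ \sim\ }\mathrm{mod}\,\Lambda$ — every $\Lambda$-module is $GN$ for some $N\in\mathbf{P}_1$ obtained by lifting a projective presentation through $\mathrm{add}(P)$, and $G$ is fully faithful there — under which $\mathcal{C}$ corresponds to $\mathrm{inj}\,\Gamma$. Given $Y=GN$ with $\mathrm{Ext}^i_\Lambda(X,Y)=0$ for $0<i<d$, the displayed isomorphism (with $I=D\Gamma$) turns this into $\mathrm{Ext}^i_\Gamma(D\Gamma,N)=0$ for $0<i<d$, and I must deduce that $N$ is injective, so that $Y\in\mathcal{C}$; this is where $\mathrm{gl.dim}(\Gamma)\leq d+1$ enters, bounding the injective dimension of $N$ and letting the vanishing propagate through the (co)syzygies of $N\in\mathbf{P}_1$. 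The second inclusion $\{Y:\mathrm{Ext}^i_\Lambda(Y,X)=0\ \forall\,0<i<d\}\subseteq\mathrm{add}(X)$ is not directly accessible through $G$, since Theorem \ref{aptii} would require the \emph{first} argument to lie in $\mathbf{P}_{i+1}$; I would instead obtain it from the left–right symmetry of the hypotheses, applying the same argument to $\Gamma^{op}$ and $DX$ (as $\mathrm{gl.dim}$ and $\mathrm{dom.dim}$ are preserved by the duality $D$) and transporting back along $D$.

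The main obstacle is precisely this pair of reverse inclusions — converting Ext-orthogonality back into membership of $\mathrm{add}(X)$. Concretely it reduces to the claim that an $N\in\mathbf{P}_1$ with $\mathrm{Ext}^i_\Gamma(D\Gamma,N)=0$ for $0<i<d$ must be injective, and this is the one step that genuinely requires combining $\mathrm{dom.dim}(\Gamma)\geq d+1$ (to place the injectives in $\mathbf{P}_d$ and make the transfer valid in the full range $0<i<d$) with $\mathrm{gl.dim}(\Gamma)\leq d+1$ (to force the injective dimension down). The bookkeeping of which modules lie in which $\mathbf{P}_k$, and the dual passage handling the second condition, are where I expect the real care to be needed.
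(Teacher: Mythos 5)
Your setup diverges from the paper's proof at the very first step, and in your favour. The paper takes $\mathcal{C}:=\mathrm{add}(G\,{}_\Gamma\Gamma)$, whereas you take $\mathcal{C}:=\mathrm{add}(G(D\Gamma))=\mathrm{add}(DP)$; your choice is the correct one. In the model case $\Gamma=\mathrm{End}_\Lambda(X)$, $P=\mathrm{Hom}_\Lambda(X,D\Lambda)$, one computes $G(D\Gamma)\cong X$ but $G({}_\Gamma\Gamma)\cong \mathrm{Hom}_\Lambda(D\Lambda,X)$, which is not even additively equivalent to $\mathrm{add}(X)$ in general: already for $\Lambda=kA_2$ and $d=1$ the summand $\mathrm{Hom}_\Lambda(D\Lambda,S_2)$ vanishes, so $\mathrm{add}(G\Gamma)$ has too few indecomposables to be $1$-cluster-tilting. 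Likewise, your way of getting the Ext-vanishing — applying Theorem \ref{aptii} with \emph{injective} first arguments, which lie in $\mathbf{P}_d$ exactly because $\mathrm{dom.dim}(\Gamma)\geq d+1$ — is the legitimate one; the paper invokes the theorem for first arguments in $\mathrm{add}({}_\Gamma\Gamma)$, which Theorem \ref{aptii} does not license (a projective lies in $\mathbf{P}_{i+1}$ only if it lies in $\mathrm{add}(P)$). So on the candidate subcategory and the ``easy half'' your blind attempt is correct and in fact repairs the source.

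The genuine gap is the maximality step, which you explicitly leave open; it is not bookkeeping but the entire content of the proposition. Your reduction is valid (writing $Y=GN$ with $N\in\mathbf{P}_1$ via the equivalence $G:\mathbf{P}_1\to\mathrm{mod}\Lambda$ and transferring the hypothesis to $\mathrm{Ext}^i_\Gamma(D\Gamma,N)=0$ for $0<i<d$), but the key claim — such an $N$ is injective — is only gestured at, and the gesture as phrased is insufficient: $\mathrm{gl.dim}(\Gamma)\leq d+1$ alone bounds $\mathrm{inj.dim}(N)$ by $d+1$, which proves nothing. One must use membership in $\mathbf{P}_1$ quantitatively, e.g.\ as follows: dualize to $M:=DN$ over $\Gamma^{op}$, so that $\mathrm{Ext}^i_{\Gamma^{op}}(M,\Gamma)=0$ for $0<i<d$ and the $\mathrm{add}(P)$-presentation of $N$ becomes an exact sequence $0\to M\to J^0\to J^1$ with $J^0,J^1$ projective; the image of $J^0\to J^1$ is a submodule of a projective, hence has projective dimension at most $d$, hence $\mathrm{proj.dim}(M)\leq d-1$; and if $1\leq \mathrm{proj.dim}(M)=k\leq d-1$, minimality of a projective resolution forces $\mathrm{Ext}^k_{\Gamma^{op}}(M,\Gamma)\neq 0$, a contradiction — so $M$ is projective and $N$ is injective. (The paper proves this point by the dual route inside $\mathrm{mod}\Lambda$: coresolve $M$ injectively, apply $\mathrm{Hom}_\Lambda(X,-)$, and use $\mathrm{gl.dim}(\Gamma)\leq d+1$ to force $\mathrm{Hom}_\Lambda(X,M)$ to be projective.) Finally, your plan to obtain the second orthogonality condition by running everything over $\Gamma^{op}$ is workable, but it silently uses that dominant dimension is left--right symmetric (M\"uller's theorem), which you should cite; the paper's one-word ``dually'' hides the same issue.
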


\begin{proof}
Let $\Lambda:=\mathrm{End}_\Gamma(P)$ and $G:\mathrm{Hom}_\Gamma(P,-)$. We will show $\mathcal{C}:=\mathrm{add}(G_\Gamma\Gamma)\subseteq \mathrm{mod}(\Lambda)$ is $d$-cluster tilting. By assumption $D\Gamma \in \mathbf{P}_{d}$. So Theorem \ref{aptii} implies an isomorphism $$\mathrm{Ext}_\Gamma ^i(X,Y)\rightarrow \mathrm{Ext}^i_\Lambda(GX,GY)$$ for all $X,Y\in\mathrm{add}(_\Gamma\Gamma)$ and all $0<i<d$. Hence $\mathrm{Ext}^i_\Lambda(GX,GY)=0$ for all $0<i<d$. To show $\mathrm{add}(G_\Gamma\Gamma)$ is a $d$-cluster-tilting subcategory, we have to show maximality. So suppose on the other hand that there exists some $M\notin\mathcal{C}$ such that $\mathrm{Ext}^i_A(C,M)=0$ for all $0<i<d$ and all 
$C\in\mathcal{C}$. Let $F:=\mathrm{Hom}_\Lambda(G_\Gamma \Gamma, -)$ and suppose $M$ has an injective resolution:
$$0\rightarrow M\rightarrow I_0\rightarrow I_1\rightarrow \cdots.$$
Since $\mathrm{Ext}^i_A(C,M)=0$ for all $0<i<d$, we have an exact sequence
$$0\rightarrow FM\rightarrow FI_0 \rightarrow \cdots \rightarrow FI_{d}$$
whereby $FI_j$ is projective-injective for each $0\leq j\leq d$. Let $N$ be the $\Lambda$-module $N:=\mathrm{coker}(FI_{d-1}\rightarrow FI_d)$. Since $\mathrm{gl.dim}(\Gamma)\leq d+1$, the sequence 
$$0\rightarrow FM\rightarrow FI_0 \rightarrow \cdots \rightarrow FI_{d}\rightarrow N\rightarrow 0$$ is a projective resolution of $N$, and hence $FM$ is projective, in other words $M\in\mathcal{C}.$ Dually, for any $M\notin\mathcal{C}$ such that $\mathrm{Ext}^i_A(M,X)=0$ for all $0<i<d$ and all 
$C\in\mathcal{C}$, we must have $M\in\mathcal{C}$. Therefore $\mathcal{C}\subseteq\mathrm{mod} \Lambda$ is $d$-cluster  tilting.
\end{proof}

\begin{lemma}\label{iyamalemma}
Let $\Lambda,\Gamma,X$ as above. There are mutually inverse equivalences $$F:\mathrm{add}(X)\rightarrow \mathrm{add}(_\Gamma \Gamma)$$ $$G:\mathrm{add}(_\Gamma \Gamma)\rightarrow \mathrm{add}(X)$$
\end{lemma}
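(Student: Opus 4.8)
The statement is the classical ``projectivization'' equivalence, so the plan is to prove that $F:=\mathrm{Hom}_\Lambda(X,-)$ restricts to an equivalence $\mathrm{add}(X)\to\mathrm{add}({}_\Gamma\Gamma)$ and to take $G$ to be its quasi-inverse; note that none of the $d$-cluster-tilting hypotheses are actually needed here, only $\Gamma=\mathrm{End}_\Lambda(X)$ together with the fact that $\mathrm{mod}\,\Lambda$ is Krull--Schmidt. First I would record that $FX=\mathrm{Hom}_\Lambda(X,X)=\Gamma={}_\Gamma\Gamma$ and that $F$ is additive, so $F(X^{n})\cong{}_\Gamma\Gamma^{\,n}$ and $F$ carries direct summands to direct summands; hence $F(\mathrm{add}(X))\subseteq\mathrm{add}({}_\Gamma\Gamma)$ and $F$ is well defined on objects.

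The next step is full faithfulness of $F$ on $\mathrm{add}(X)$. Both $\mathrm{Hom}_\Lambda(M,N)$ and $\mathrm{Hom}_\Gamma(FM,FN)$ are additive bifunctors on $\mathrm{add}(X)$ and the map induced by $F$ is an additive natural transformation between them, so it suffices to check that it is bijective when $M=N=X$; since every object of $\mathrm{add}(X)$ is a direct summand of some $X^{n}$, the general case then follows by passing to summands. For $M=X$ and arbitrary $N$, the composite $\mathrm{Hom}_\Lambda(X,N)\xrightarrow{\,F\,}\mathrm{Hom}_\Gamma({}_\Gamma\Gamma,FN)\xrightarrow{\ \sim\ }FN=\mathrm{Hom}_\Lambda(X,N)$, where the second arrow is the Yoneda isomorphism $\varphi\mapsto\varphi(1)$, is the identity, so $F$ is fully faithful. (Alternatively, the case $i=1$ of Theorem \ref{aptii} supplies the same Hom-isomorphism for the functor $G=\mathrm{Hom}_\Gamma(P,-)$, since projectives lie in $\mathbf{P}_d\subseteq\mathbf{P}_1$.)

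For essential surjectivity, take $Q\in\mathrm{add}({}_\Gamma\Gamma)$; then $Q$ is a direct summand of some ${}_\Gamma\Gamma^{\,n}=F(X^{n})$, corresponding to an idempotent $e\in\mathrm{End}_\Gamma(F(X^{n}))$. By the fullness just established, $e=F(e')$ for an idempotent $e'\in\mathrm{End}_\Lambda(X^{n})$, and because $\mathrm{mod}\,\Lambda$ is idempotent complete (Krull--Schmidt) the idempotent $e'$ splits, producing a summand $M:=\mathrm{im}(e')\in\mathrm{add}(X)$ with $FM\cong\mathrm{im}(e)\cong Q$. Thus $F$ is an equivalence $\mathrm{add}(X)\to\mathrm{add}({}_\Gamma\Gamma)$; letting $G$ be a quasi-inverse (which in the situation of Proposition \ref{iyii} is realised by $\mathrm{Hom}_\Gamma(P,-)$, since there $G({}_\Gamma\Gamma)\cong X$) yields the mutually inverse pair. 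I expect the only genuine obstacle to be this last step: one must lift the splitting idempotent $e$ through $F$ and then split its preimage in $\mathrm{mod}\,\Lambda$, so the argument really does rely on fullness together with idempotent completeness of the module category, whereas the remaining verifications are formal consequences of the Yoneda lemma and additivity.
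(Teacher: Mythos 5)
Your proof is correct, but it reaches the statement by a genuinely different route than the paper. You prove from scratch that $F=\mathrm{Hom}_\Lambda(X,-)$ is an equivalence $\mathrm{add}(X)\to\mathrm{add}({}_\Gamma\Gamma)$ --- full faithfulness by Yoneda plus additivity, essential surjectivity by lifting an idempotent through $F$ and splitting it in $\mathrm{mod}\,\Lambda$ (note that knowing the lifted endomorphism $e'$ is idempotent uses faithfulness as well as fullness, but you have both) --- and then take $G$ to be an abstract quasi-inverse. The paper goes the other way around: it treats the fact that $F$ is an equivalence (projectivization) as already discussed, and instead spends its entire proof verifying that the \emph{explicit} functor $G=\mathrm{Hom}_\Gamma(P,-)$ inverts $F$, via the tensor-hom adjunction $\mathrm{Hom}_\Gamma(P,\mathrm{Hom}_\Lambda(X,-))\cong\mathrm{Hom}_\Lambda(X\otimes_\Gamma P,-)$ together with the evaluation isomorphism $X\otimes_\Gamma \mathrm{Hom}_\Lambda(X,\Lambda)\cong\Lambda$; this last step uses that $\Lambda\in\mathrm{add}(X)$, i.e.\ that $X$ is a generator. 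Your argument is more elementary and more general (no cluster-tilting or generator hypothesis is needed for the equivalence itself), but the paper's computation carries information your version omits: for the $d$-Auslander correspondence one needs that the \emph{particular} functor $\mathrm{Hom}_\Gamma(P,-)$ appearing in Proposition \ref{iyii} recovers $\mathrm{add}(X)$ from $\mathrm{End}_\Lambda(X)$, i.e.\ that $G\circ F\cong 1$ for that named $G$, and the mere existence of some quasi-inverse does not give this. Your parenthetical claim that $G({}_\Gamma\Gamma)\cong X$ handles the direction where one starts from $\Gamma$ (where it holds essentially by definition of $\mathcal{C}$ in Proposition \ref{iyii}), but when one starts from a $d$-cluster-tilting module $X$ this identification is precisely the content of the paper's adjunction computation; so to feed the lemma into the correspondence theorem you would still need to add that computation, or an equivalent explicit identification of the quasi-inverse.
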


\begin{proof}
That $F$ is an equivalence has already been discussed. Moreover \begin{align*}G\circ F&=\mathrm{Hom}_\Gamma(P, \mathrm{Hom}_\Lambda(X,-))\\ &=\mathrm{Hom}_\Lambda(X\otimes_\Gamma P, -)\\&=\mathrm{Hom}_\Lambda(X\otimes_\Gamma \mathrm{Hom}_\Gamma(X,\Lambda),-)\\&=\mathrm{Hom}_\Lambda(\Lambda,-)\\&=1
\end {align*}
\end{proof}
Let two $d$-cluster-tilting modules $M,N\in\mathrm{mod}\Lambda$ be \emph{equivalent} whenever the categories $\mathrm{add}(M)$ and $\mathrm{add}(N)$ are equivalent. 
\begin{theorem}[$d$-Auslander correspondence]\cite[Theorem 0.2]{iy3}
For any $d\geq 1$ there exists a bijection between the equivalence classes of $d$-cluster-tilting modules $X\in\mathrm{mod}\Lambda$ and the set of Morita-equivalence classes of $d$-Auslander algebras, given by $X\mapsto \mathrm{End}_\Lambda(X)$. 
\end{theorem}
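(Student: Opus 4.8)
\section*{Proof proposal}

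The plan is to exhibit the assignment $\Phi\colon X\mapsto \mathrm{End}_\Lambda(X)$ together with an explicit candidate inverse $\Psi$, and to check that they are mutually inverse bijections on the two sets of equivalence classes. I would organise the argument into well-definedness of $\Phi$, construction and well-definedness of $\Psi$, and the verification that $\Phi\circ\Psi$ and $\Psi\circ\Phi$ are the identity. For well-definedness of $\Phi$, Proposition \ref{iyi} already shows that $\mathrm{End}_\Lambda(X)$ is a $d$-Auslander algebra whenever $X$ is $d$-cluster-tilting, so $\Phi$ lands in the correct target. To see that $\Phi$ respects equivalence classes, suppose $X$ and $X'$ are equivalent, i.e. $\mathrm{add}(X)\simeq\mathrm{add}(X')$; since $\mathrm{mod}\ \mathrm{End}_\Lambda(X)\simeq \mathrm{mod}\ \mathrm{add}(X)$ by the equivalence recalled in Section \ref{sec1}, we obtain $\mathrm{mod}\ \mathrm{End}_\Lambda(X)\simeq \mathrm{mod}\ \mathrm{End}_\Lambda(X')$, whence the endomorphism algebras are Morita equivalent and $\Phi$ descends to a map on equivalence classes.

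Next I would define the candidate inverse $\Psi$ by means of Proposition \ref{iyii}: to a $d$-Auslander algebra $\Gamma$ with minimal projective-injective generator $P$, assign the algebra $\Lambda:=\mathrm{End}_\Gamma(P)$ together with the $d$-cluster-tilting module $X:=G({}_\Gamma\Gamma)$, where $G=\mathrm{Hom}_\Gamma(P,-)$; Proposition \ref{iyii} guarantees that $\mathrm{add}(X)$ is a $d$-cluster-tilting subcategory of $\mathrm{mod}\Lambda$. To verify that $\Psi$ is well-defined on Morita-equivalence classes, I would use that a Morita equivalence $\mathrm{mod}\Gamma\simeq\mathrm{mod}\Gamma'$ preserves projective-injective modules and minimality, hence carries $P$ to the corresponding generator $P'$; this induces an isomorphism $\Lambda\cong\Lambda'$ and an equivalence $\mathrm{add}(X)\simeq\mathrm{add}(X')$, so the equivalence class of the resulting cluster-tilting module is unchanged.

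The core of the proof is the verification that $\Phi$ and $\Psi$ are mutually inverse, which is precisely where Lemma \ref{iyamalemma} does the work. For $\Phi\circ\Psi=\mathrm{id}$, starting from $\Gamma$ I form $X=G({}_\Gamma\Gamma)$; by Lemma \ref{iyamalemma} the functor $G$ restricts to an equivalence $\mathrm{add}({}_\Gamma\Gamma)\to\mathrm{add}(X)$ with inverse $F=\mathrm{Hom}_\Lambda(X,-)$, and an equivalence of additive categories induces an isomorphism on endomorphism algebras, so $\mathrm{End}_\Lambda(X)\cong\mathrm{End}_\Gamma({}_\Gamma\Gamma)\cong\Gamma$, recovering $\Gamma$ up to Morita equivalence. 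For $\Psi\circ\Phi=\mathrm{id}$, starting from a $d$-cluster-tilting $X$ with $\Gamma=\mathrm{End}_\Lambda(X)$, the projective-injective $\Gamma$-modules are $\mathrm{add}(F(D\Lambda))$ as recalled before Proposition \ref{iyi}, so the minimal projective-injective generator is $P=F(D\Lambda)$; since $F$ is an equivalence $\mathrm{add}(X)\to\mathrm{add}({}_\Gamma\Gamma)$ and $D\Lambda\in\mathrm{add}(X)$, one gets $\mathrm{End}_\Gamma(P)\cong\mathrm{End}_\Lambda(D\Lambda)\cong\Lambda$, while applying $G$ to ${}_\Gamma\Gamma=F(X)$ returns $G(F(X))\cong X$ by Lemma \ref{iyamalemma}. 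Thus the pair $(\Lambda,X)$ is reconstructed up to equivalence, and the two composites are identities.

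The step I expect to be the main obstacle is the bookkeeping in $\Psi\circ\Phi$: one must correctly identify the minimal projective-injective generator $P$ of $\Gamma=\mathrm{End}_\Lambda(X)$ with $F(D\Lambda)$ and confirm that $\mathrm{End}_\Gamma(P)$ is genuinely isomorphic to the original base algebra $\Lambda$, so that both $\Lambda$ and $X$ are recovered rather than merely some Morita-equivalent data. This hinges on the duality $D$ identifying $\mathrm{End}_\Lambda(D\Lambda)$ with $\Lambda$ and on Lemma \ref{iyamalemma} supplying that $F$ and $G$ are genuinely mutually inverse on the relevant additive subcategories. Once these identifications are in place, the two directions compose to the identity on equivalence classes and the asserted bijection follows.
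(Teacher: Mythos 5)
Your decomposition is the same as the paper's: the paper proves the theorem by combining Proposition \ref{iyi}, Proposition \ref{iyii} and Lemma \ref{iyamalemma}, and your $\Phi$, $\Psi$ and the two composites are exactly how those three results are meant to be assembled, so the overall route is not the issue. The genuine gap is in the step you yourself singled out as the main obstacle, and it is not mere bookkeeping. You take $P=F(D\Lambda)$, the minimal projective-injective generator, put $G=\mathrm{Hom}_\Gamma(P,-)$, and then invoke Lemma \ref{iyamalemma} to get $G(F(X))\cong X$ and, in the other composite, that $G$ restricts to an equivalence $\mathrm{add}({}_\Gamma\Gamma)\to\mathrm{add}(X)$. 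For this choice of $P$ these claims are false: by full faithfulness of $F$ on $\mathrm{add}(X)$, for $M\in\mathrm{add}(X)$ one has $G(F(M))=\mathrm{Hom}_\Gamma(F(D\Lambda),F(M))\cong\mathrm{Hom}_\Lambda(D\Lambda,M)=\nu^{-1}M$, the inverse Nakayama functor, not $M$. The pair $(F,G)$ of Lemma \ref{iyamalemma} is mutually inverse only when $P=F(\Lambda)=\mathrm{Hom}_\Lambda(X,\Lambda)$; that is precisely what the lemma's own proof uses, since it needs $X\otimes_\Gamma P\cong\Lambda$, and $F(\Lambda)$ is not projective-injective unless $\Lambda$ is self-injective. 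Concretely, take $d=1$, $\Lambda$ the path algebra of $1\to 2$, $X$ the direct sum of the three indecomposables and $\Gamma$ its Auslander algebra: then $\mathrm{Hom}_\Gamma(F(D\Lambda),{}_\Gamma\Gamma)\cong\mathrm{Hom}_\Lambda(D\Lambda,X)\cong{}_\Lambda\Lambda$, whose additive closure is $\mathrm{proj}\,\Lambda$, not a $1$-cluster-tilting subcategory, and its endomorphism ring is $\Lambda$, not $\Gamma$. So with your $P$ neither composite closes up, while with the lemma's $P$ the inverse map $\Psi$ is no longer defined intrinsically in terms of $\Gamma$ alone.

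The repair (which is what Iyama does, and what Theorem \ref{aptii} is tailored for) is to keep $P$ the projective-injective generator but recover the cluster-tilting module as $G(D\Gamma)=\mathrm{Hom}_\Gamma(P,D\Gamma)\cong DP$, not as $G({}_\Gamma\Gamma)$. Then $\mathrm{Ext}^i$-vanishing on $\mathrm{add}(G(D\Gamma))$ for $0<i<d$ follows from Theorem \ref{aptii} applied with first argument $D\Gamma\in\mathbf{P}_d$ (exactly the dominant-dimension hypothesis) together with injectivity of $D\Gamma$; and in the round trip one gets $\mathrm{Hom}_\Gamma(F(D\Lambda),D\Gamma)\cong D\bigl(F(D\Lambda)\bigr)\cong D(DX)\cong X$ and $\mathrm{End}_\Gamma(P)\cong\mathrm{End}_\Lambda(D\Lambda)\cong\Lambda$, as desired. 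To be fair, the mismatch is inherited from the paper: Proposition \ref{iyii} also writes the recovered subcategory as $\mathrm{add}(G\,{}_\Gamma\Gamma)$, while Lemma \ref{iyamalemma}'s proof silently switches to $P=F(\Lambda)$, and the paper's one-line proof of the theorem never reconciles the two. But your write-up makes the false identification load-bearing, so as written the two composites $\Phi\circ\Psi$ and $\Psi\circ\Phi$ are not the identity on equivalence classes.
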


\begin{proof}
This follows from Proposition \ref{iyi}, Proposition \ref{iyii} and Lemma \ref{iyamalemma}\end{proof}

\bibliographystyle{amsplain}
\bibliography{extendinging}

\end{document}